\documentclass[12pt]{article}%
\usepackage{amsmath}
\usepackage{amsfonts}
\usepackage{amssymb}
\usepackage{graphicx}%
\setcounter{MaxMatrixCols}{30}
\providecommand{\U}[1]{\protect\rule{.1in}{.1in}}
\newtheorem{theorem}{Theorem}

\newtheorem{corollary}[theorem]{Corollary}

\newtheorem{definition}[theorem]{Definition}
\newtheorem{example}[theorem]{Example}

\newtheorem{lemma}[theorem]{Lemma}

\newtheorem{proposition}[theorem]{Proposition}

\newenvironment{proof}[1][Proof]{\noindent\textbf{#1.} }{\ \rule{0.5em}{0.5em}}
\begin{document}

\title{Uniform Integrability in vector lattices and applications.}
\author{Youssef Azouzi\\{\small Research Laboratory of Algebra, Topology, Arithmetic, and Order}\\{\small Department of Mathematics}\\{\small Faculty of Mathematical, Physical and Natural Sciences of Tunis}\\{\small Tunis-El Manar University, 2092-El Manar, Tunisia}}
\date{}
\maketitle

\begin{abstract}
In this paper, we explore an abstraction of uniform integrability in vector
lattices and demonstrate its application by providing a positive solution to
an open question posed by Kuo, Rodda, and Watson. Specifically, we show that
for every $p\in(1,\infty)$ and with $T$ as a conditionally expectation
operator, spaces $L^{p}\left(  T\right)  $ are sequentially complete.
Furthermore, we demonstrate that a de La Vall\'{e} Poussin Theorem does not
hold in the general setting of vector lattices.

\end{abstract}

\section{Introduction}

The concept of uniform integrability is an efficient tool in analysis and in
probability and specially in martingale theory, see for example \cite[Chapers
13,14]{b-23} and \cite{b-1804}. An analogous notion has been introduced by
Kuo, Vardy and Watson in the theory of free-measure framework of Riesz spaces,
as mentioned in \cite{L-04}. In this framework we consider a Dedekind complete
Riesz space $E$ with weak order unit $e,$ equipped with a conditional
expectation operator $T$ that fixes $e.$ This operator is crucial in defining
various mathematical concepts such as filtrations, martingales, stopping
times, and more. Several researchers, including Kuo, Labushagne, Watson,
Grobler, Azouzi, Ramdane, and others, have made significant progress in this
area, as documented in \cite{L-24,L-03, L-14,L-180,L-311,L-06,L-11}. This
setting has allowed for the generalization of many fundamental and advanced
results from the classical theory of probability. However, certain basic
notions, such as completeness, continue to present challenges. While some
theorems hold true in surprisingly general situations with relatively simple
proofs, others are much more difficult to formulate and prove satisfactorily.
The Egorov theorem is a good example of the latter. Formulating a `good'
statement that appears to be true can be a promising start, but in many cases,
proving the statement requires a deep understanding of the subject. To address
difficult problems in the field, new concepts and techniques often need to be
introduced. This work focuses on studying the strong completeness of spaces
$L^{p}\left(  T\right)  $. This concept is analogous to the Riesz-Fisher
Theorem, which asserts that spaces $L^{p}\left(  \mu\right)  $ are
norm-complete.\textrm{ }In the context of Riesz spaces, our spaces are
equipped with a vector-valued norm, and convergence is defined in terms of
order. Therefore, completeness for sequences and nets are different concepts.
The main tool utilized in this work is $T$-uniformity, which is an abstraction
of uniform integrability. This work begins with a brief overview of the
relevant topic, followed by the presentation of our new results and their
applications. Specifically, we demonstrate that if $E$ is a Dedekind complete
Riesz space with a weak order unit $e$, and $T$ is a conditional expectation
operator, then the Riesz space $L^{p}\left(  T\right)  $ (as defined in
Section 2) is sequentially complete for $1<p<\infty.$ Our result resolves an
open question that was raised in \cite{L-360}. The authors of \cite{L-360} had
previously established the strong sequential completeness of the space
$L^{1}\left(  T\right)  .$ Their proof is not an adaptation of the proof for
classical space $L^{1}\left(  \mu\right)  $ because it is no longer true that
a strong Cauchy sequence $\left(  x_{n}\right)  $ has a subsequence $\left(
y_{n}\right)  $ such that the series $\sum y_{n}$ is order convergent. Our
result completes their work and establishes that all spaces $L^{p}\left(
T\right)  $ are squentially strong complete for $p\in\left[  1,\infty\right]
$. However, the full completeness of these spaces is still an open problem,
and has only been proven for two values of $p$. The first case, $p=\infty,$ as
a relatively straightforward proof and can be found in \cite{L-360}, while the
second case (for $p=2$) is much more complex and has recently been
demonstrated by Kalauch, Kuo, and Watson in \cite{L-928}. Their proof is based
on a Hahn-Jordan type decomposition which is proved in a separate paper
\cite{L-886}. For informations about spaces $L^{p}\left(  T\right)  $ we refer
the reader to \cite{L-180}.

Several forms of convergence will be employed in this work, none of which are
topological. We will primarily focus on four types of convergence: order
convergence, strong convergence, and their unbounded counterparts, which are
analogous to almost sure convergence and convergence in probability.
Throughout our presentation, we will strive to elucidate the motivations
behind these generalizations and how they can help to provide a more profound
understanding of the crucial role played by order in probability theory and,
more broadly, in measure theory.

For more information about order convergence and unbounded order convergence
the reader is referred to \cite{L-65,L-444} and references therein.
Additionally, for a discussion of convergence in $T$-conditional probability,
we refer the reader to \cite{L-14}.

The paper is organized as follows. In Section 2, we provide some
preliminaries, including basic properties of convergence in $T$-conditional probability.

Section 3 focuses on $T$-uniformity, the central tool in our work. We
demonstrate that the de La Vall\'{e}e Poussin criterion for uniform
integrability fails in the setting of Riesz spaces. Moreover, we establish
that the condition given by this criterion is only sufficient and no longer
necessary for a family in $L^{1}\left(  T\right)  $ to be $T$-uniform. We also
prove a version of Vitali's theorem in $L^{p}\left(  T\right)  ,$ which is a
crucial result in demonstrating the strong sequential completeness of the
spaces $L^{p}\left(  T\right)  .$ In the last section, we apply our earlier
results to make significant progress in completeness by showing that all
spaces $L^{p}\left(  T\right)  $ are sequentially strong complete. This result
answers a question posed by Kuo, Rodda and Watson in \cite{L-360} as mentioned earlier.

\section{Preliminaries}

In the following discussion, we will be working with a Riesz conditional
triple $\left(  E,e,T\right)  ,$ where $E$ is a Dedekind complete Riesz space
(or vector lattice), $e$ a weak order unit and $T$ is a conditional
expectation operator, that is, an order continuous strictly positive
projection whose range $R\left(  T\right)  $ is a Dedekind complete Riesz
subspace with $Te=e.$ The universal completion of $E$ is denoted by $E^{u},$
while its sup-completion is denoted by $E^{s}.$ It is shown in \cite{L-24}
that there exists a largest Riesz subspace of $E^{u}$, known as the
\textbf{natural domain} of $T,$ denoted by $L^{1}(T),$ to which $T$ extends
uniquely to a conditional expectation. It is noteworthy that $L^{1}\left(
T\right)  $ possesses a desirable characteristic of being $T$-universally
complete. This means that any increasing net $(x_{\alpha})$ in $L^{1}(T)^{+}$
with $\left(  Tx_{\alpha}\right)  $ order bounded in $L^{1}(T)^{u}$ converges
in order. Accordingly, once we extend the conditional expectation $T$ on $E$
to its natural domain $L^{1}(T)$, the space $E$ becomes irrelevant. This
eliminates any reason for considering conditional expectations on other than
their natural domains. Now, it is well-known that a multiplication can be
defined in the universal completion $L^{1}(T)^{u}$ of $L^{1}(T)$ so that
$L^{1}(T)^{u}$ becomes an $f$-algebra with $e$ as identity (see \cite[Section
50]{b-241} for more details. Spaces $L^{p}\left(  T\right)  $ are introduced
in \cite{L-180} as a generalization of $L^{2}\left(  T\right)  $ introduced
earlier by Labuschagne and Watson. For $p\in\left(  1,\infty\right)  $ we put%

\[
L^{p}(T)=\{x\in L^{1}(T):|x|^{p}\in L^{1}(T)\},
\]
and
\[
N_{p}(x)=T(|x|^{p})^{1/p}\text{ for all }x\in L^{p}(T).
\]

The space $L^{\infty}\left(  T\right)  $ is defined as the ideal of
$L^{1}\left(  T\right)  ^{u}$ generated by $R\left(  T\right)  :$%
\[
L^{\infty}\left(  T\right)  =\left\{  x\in L^{1}\left(  T\right)  :\left\vert
f\right\vert \leq u\text{ for some }u\in R\left(  T\right)  \right\}  .
\]
It is equipped with the $R\left(  T\right)  $-vector valued norm $\left\Vert
.\right\Vert _{T,\infty}$ given by%
\[
N_{\infty}\left(  x\right)  =\inf\left\{  u\in R\left(  T\right)  :\left\vert
x\right\vert \leq u\right\}  \text{ for all }x\in L^{\infty}\left(  T\right)
.
\]

For further details about these spaces, we suggest referring to \cite{L-180}.
In the context of Riesz spaces, the notion of convergence almost surely has
been extended to unbounded order convergence. Interestingly, in $L^{p}$-spaces
for sequences, these two types of convergence coincide. Furthermore,
convergence in probability (or more generally in measure) has been generalized
to convergence in $T$-conditional probability. We would like to remind the
reader of the following result from \cite{L-14}.

\begin{definition}
A net $\left(  f_{\alpha}\right)  $ in $E$ is said to be convergent to $f$ in
$T$-conditional probability if $TP_{\left(  \left\vert f_{\alpha}-f\right\vert
-\varepsilon e\right)  ^{+}}e$ converges to $0$ in order, where $\varepsilon$
is any positive number.
\end{definition}

Let us recall that if $\theta$ is any type of convergence in a Riesz space
then its unbounded version, which can be denoted by $u\theta,$ is defined as
follows: a net $\left(  x_{\alpha}\right)  $ is said to be unbounded $\theta$
converging to $x,$ and we write $x_{\alpha}\overset{\mathfrak{u\theta}%
}{\longrightarrow}x$ if $\left\vert x_{\alpha}-x\right\vert \wedge
u\overset{\theta}{\longrightarrow}0.$ There are three types of convergences
that have been widely studied in this context: order convergence, norm
convergence, and absolute weak convergence. We record next a lemma \cite[Lemma
4]{L-900} which will be used in the proof of the main result of this paper.
The lemma states that the convergence in $T$-conditional probability is the
unbounded version of the $T$-strong convergence, in other words convergence in
$T$-conditional probability is exactly the unbounded norm convergence with
respect to the $R\left(  T\right)  $-vector valued norm $\left\Vert
.\right\Vert _{1,T}.$

\begin{lemma}
\label{UI-B}\textit{Let} $\left(  E,e,T\right)  $ \textit{be a conditional
Riesz triple}. \textit{For a net} $(x_{\alpha})_{\alpha\in A}$ \textit{in}
$E$, \textit{the following statements are equivalent}.

\begin{enumerate}
\item $x_{\alpha}\rightarrow x$ \textit{in }$T$\textit{-conditional
probability};

\item $T(|x_{\alpha}-x|\wedge u)\overset{o}{\longrightarrow}$ \textit{for
every} $u\in E_{+};$

\item $T(|x_{\alpha}-x|\wedge e)\overset{o}{\longrightarrow}0$.
\end{enumerate}
\end{lemma}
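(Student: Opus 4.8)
The plan is to establish the cycle $(2)\Rightarrow(3)\Rightarrow(1)\Rightarrow(2)$, writing $g_{\alpha}:=\left\vert x_{\alpha}-x\right\vert$ and working inside the $f$-algebra $L^{1}(T)^{u}$ with unit $e$, in which every band projection $P$ is multiplication by a component of $e$ and is a lattice homomorphism. The implication $(2)\Rightarrow(3)$ is immediate upon taking $u=e$.

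For $(3)\Rightarrow(1)$ I would use a Chebyshev-type estimate. Fix $0<\varepsilon\leq 1$ and put $P_{\alpha}:=P_{\left(g_{\alpha}-\varepsilon e\right)^{+}}$. Since the band projection onto the band generated by $a^{+}$ sends $a$ to $a^{+}$, we have $P_{\alpha}\left(g_{\alpha}-\varepsilon e\right)=\left(g_{\alpha}-\varepsilon e\right)^{+}\geq 0$, hence $P_{\alpha}g_{\alpha}\geq\varepsilon P_{\alpha}e$. As $P_{\alpha}$ preserves $\wedge$ and $\varepsilon\leq 1$,
\[
g_{\alpha}\wedge e\geq P_{\alpha}\left(g_{\alpha}\wedge e\right)=\left(P_{\alpha}g_{\alpha}\right)\wedge\left(P_{\alpha}e\right)\geq\left(\varepsilon P_{\alpha}e\right)\wedge\left(P_{\alpha}e\right)=\varepsilon P_{\alpha}e,
\]
so $P_{\alpha}e\leq\varepsilon^{-1}\left(g_{\alpha}\wedge e\right)$ and positivity of $T$ gives $T\left(P_{\alpha}e\right)\leq\varepsilon^{-1}T\left(g_{\alpha}\wedge e\right)\overset{o}{\longrightarrow}0$. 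For $\varepsilon>1$ the monotonicity of $\varepsilon\mapsto\left(g_{\alpha}-\varepsilon e\right)^{+}$ yields $P_{\left(g_{\alpha}-\varepsilon e\right)^{+}}\leq P_{\left(g_{\alpha}-e\right)^{+}}$, so the case $\varepsilon=1$ covers it. This is precisely $(1)$.

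The delicate step is $(1)\Rightarrow(2)$. Fix $u\in E_{+}$ and $\varepsilon>0$, and again set $P_{\alpha}:=P_{\left(g_{\alpha}-\varepsilon e\right)^{+}}$. On the complementary band one has $\left(I-P_{\alpha}\right)g_{\alpha}\leq\varepsilon\left(I-P_{\alpha}\right)e\leq\varepsilon e$, while on the band $P_{\alpha}$ one bounds $g_{\alpha}\wedge u$ by $u$; splitting $g_{\alpha}\wedge u$ along these two bands gives $g_{\alpha}\wedge u\leq\varepsilon e+P_{\alpha}u$. To control $T\left(P_{\alpha}u\right)$ for a possibly unbounded $u$ I would truncate: for $n\in\mathbb{N}$, writing $u=\left(u\wedge ne\right)+\left(u-ne\right)^{+}$ and using $P_{\alpha}\left(u\wedge ne\right)\leq nP_{\alpha}e$,
\[
T\left(g_{\alpha}\wedge u\right)\leq\varepsilon e+n\,T\left(P_{\alpha}e\right)+T\!\left(\left(u-ne\right)^{+}\right).
\]
Here $T\left(P_{\alpha}e\right)\overset{o}{\longrightarrow}0$ by $(1)$, and since $e$ is a weak order unit we have $\left(u-ne\right)^{+}=u-\left(u\wedge ne\right)\downarrow 0$, so order continuity of $T$ gives $T\!\left(\left(u-ne\right)^{+}\right)\downarrow 0$. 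As the net $T\left(g_{\alpha}\wedge u\right)$ is order bounded by $Tu$ in the Dedekind complete space $R(T)$, I would pass to the order limit superior: taking $\limsup_{\alpha}$ kills the middle term and leaves $\limsup_{\alpha}T\left(g_{\alpha}\wedge u\right)\leq\varepsilon e+T\!\left(\left(u-ne\right)^{+}\right)$; letting $n\to\infty$ and then $\varepsilon\downarrow 0$ forces this order limit superior, and hence the order limit, to be $0$.

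The main obstacle I anticipate is exactly this final manoeuvre: transferring the classical ``absolute continuity of the integral'' argument to nets valued in a vector lattice, where there are no $\varepsilon$-$\delta$ neighbourhoods. The justification for interchanging the $\alpha$-limit with the $n$- and $\varepsilon$-limits must be carried out purely order-theoretically, relying on the order $\limsup$/$\liminf$ characterization of order convergence for order-bounded nets, the order continuity of $T$, and the weak-unit property $u\wedge ne\uparrow u$. By contrast, the Chebyshev direction $(3)\Rightarrow(1)$ is a routine band-projection computation.
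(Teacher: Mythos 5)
Your argument is correct, but note that the paper itself gives no proof of this lemma: it is imported verbatim from \cite[Lemma 4]{L-900}, so there is nothing internal to compare against. Taken on its own merits, your cycle $(2)\Rightarrow(3)\Rightarrow(1)\Rightarrow(2)$ is sound. The Chebyshev direction is the standard computation: $P_{(g_\alpha-\varepsilon e)^{+}}g_\alpha\geq\varepsilon P_{(g_\alpha-\varepsilon e)^{+}}e$ follows from $Pa=a^{+}$ for $P=P_{a^{+}}$, and combining this with the fact that band projections are Riesz homomorphisms dominated by the identity gives $P_\alpha e\leq\varepsilon^{-1}(g_\alpha\wedge e)$; the reduction of $\varepsilon>1$ to $\varepsilon=1$ via monotonicity of the bands is also fine. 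In $(1)\Rightarrow(2)$ the splitting $g_\alpha\wedge u\leq\varepsilon e+nP_\alpha e+(u-ne)^{+}$ is exactly the right decomposition, and the two auxiliary limits you invoke are legitimate: $(u-ne)^{+}\downarrow 0$ because $e$ is a weak order unit and $T$ is order continuous, and the interchange of limits is justified because $0\leq T(g_\alpha\wedge u)\leq Tu$, so the order $\limsup$ exists in the Dedekind complete space and is subadditive, after which $\inf_n$ and $\inf_{\varepsilon}$ can be taken of the constant upper bound. Two cosmetic points: the statement of item (2) in the paper is missing the target ``$0$'' (a typo you correctly read through), and your appeal to the $f$-algebra structure of $L^{1}(T)^{u}$ is unnecessary --- everything you use about $P_\alpha$ (that it is a Riesz homomorphism with $0\leq P_\alpha\leq I$ and $P_{a^{+}}a=a^{+}$) already holds for band projections on the Dedekind complete space $E$ itself.
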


In probability theory, it is known that if a sequence $\left(  X_{n}\right)  $
of random variables converges in probability to $X,$ then $X_{n}%
^{k}\longrightarrow X^{k}$ in probability for any $k\in\mathbb{N}.$ This can
be easily deduced from the fact that $\left(  X_{n}\right)  $ converges in
probability to $X$ if and only if every subsequence of $\left(  X_{n}\right)
$ has a further subsequence that converges to $X$ almost surely. While there
is no direct Riesz space version of this result, a different proof can be
provided. The third item in the next lemma will be used in the proof of our
main result.

\begin{lemma}
\label{UI-C}Let $\left(  E,e,T\right)  $ be a conditional Riesz triple and
$\left(  x_{\alpha}\right)  $ a net in $E^{u}$ converging in $T$-conditional
probability to $x.$

\begin{enumerate}
\item We have for all $u\in E,$
\[
T\left(  \left\vert x_{\alpha}\right\vert \wedge u\right)  \overset
{o}{\longrightarrow}T\left(  \left\vert x\right\vert \wedge u\right)  .
\]

\item If $M\in E,$ then $Mx_{\alpha}\longrightarrow Mx$ in $T$-conditionally probability.

\item If $\left(  x_{a}\right)  $ is bounded \textit{then for for every
}$p\geq1,$ $\left\vert x_{\alpha}\right\vert ^{p}\longrightarrow\left\vert
x\right\vert ^{p}$ in $T$-\textit{conditional probability.}
\end{enumerate}
\end{lemma}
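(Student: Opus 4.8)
The plan is to treat the three items in increasing order of difficulty, using throughout the characterisation of convergence in $T$-conditional probability furnished by Lemma \ref{UI-B}, namely that $x_\alpha \to x$ precisely when $T(|x_\alpha - x| \wedge u) \overset{o}{\to} 0$ for every $u \in E_+$, together with the band-projection form of the definition.

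For item (1) I would exploit the nonexpansiveness of $t \mapsto t \wedge u$ and the reverse triangle inequality for moduli. Concretely, for $u \in E_+$ one has the order inequality $\big| |x_\alpha|\wedge u - |x|\wedge u \big| \le \big||x_\alpha|-|x|\big| \wedge u \le |x_\alpha - x|\wedge u$, the first bound coming from the facts that $t\mapsto t\wedge u$ is $1$-Lipschitz and that both truncations lie in the order interval $[0,u]$. Applying the positive operator $T$ and using $|T(\cdot)|\le T|\cdot|$ gives $\big|T(|x_\alpha|\wedge u)-T(|x|\wedge u)\big| \le T(|x_\alpha-x|\wedge u)$, whose right-hand side tends to $0$ in order by Lemma \ref{UI-B}. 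Domination by an order-null net yields the claim.

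Item (2) is the crux and is where I expect the real work. Writing $y_\alpha = x_\alpha - x$ (so $y_\alpha \to 0$) and $|My_\alpha| = |M|\,|y_\alpha|$ in the $f$-algebra $L^1(T)^u$, I would split the bad band of $My_\alpha$ according to whether $M$ is large or $y_\alpha$ is large, via the band-projection inequality
\[
P_{(|My_\alpha|-\varepsilon e)^+} \;\le\; P_{(|M|-ne)^+}\vee P_{(|y_\alpha|-(\varepsilon/n)e)^+},
\]
valid for every $n$: on the complementary band one has $|M|\le ne$ and $|y_\alpha|\le (\varepsilon/n)e$, whence $|My_\alpha|\le \varepsilon e$ there, using that band projections are multiplicative in an $f$-algebra. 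Applying $T$ to $e$ and using $(R\vee S)e\le Re+Se$ gives
\[
0\le T P_{(|My_\alpha|-\varepsilon e)^+}e \;\le\; T P_{(|M|-ne)^+}e \;+\; T P_{(|y_\alpha|-(\varepsilon/n)e)^+}e .
\]
For fixed $n$ the second summand tends to $0$ in order as $\alpha$ grows, directly from $y_\alpha\to 0$; the first summand is a \emph{constant} net in $\alpha$ that must be driven to $0$ by letting $n\to\infty$. This is the delicate step: since $M$ is a genuine element of the universally complete, Archimedean space $E^u$ with weak unit $e$, one has $(|M|-ne)^+\downarrow 0$, and an Archimedean argument on the carrier bands shows $P_{(|M|-ne)^+}e\downarrow 0$, whence $T P_{(|M|-ne)^+}e\downarrow 0$ by order continuity of $T$. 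A standard order-$\limsup$ (diagonal) argument over the order-bounded net $T P_{(|My_\alpha|-\varepsilon e)^+}e \in [0,e]$ then forces it to $0$, giving item (2). I expect the reduction $(|M|-ne)^+\downarrow 0$ and the legitimacy of the double limit in the order (non-topological) setting to be the main obstacles.

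For item (3), boundedness lets me linearise the power. First the limit inherits the bound: if $|x_\alpha|\le M$ for all $\alpha$, then from $(|x|-M)^+\le |x_\alpha - x|$ together with strict positivity of $T$ and $e$ being a weak unit, $T\big((|x|-M)^+\wedge e\big)=0$ forces $|x|\le M$. With $a=|x_\alpha|$ and $b=|x|\le M$ the mean-value estimate $|a^p-b^p|\le p\max(a,b)^{p-1}|a-b|$ gives $\big||x_\alpha|^p-|x|^p\big|\le pM^{p-1}|x_\alpha-x| = |N(x_\alpha-x)|$ with $N=pM^{p-1}\in E^u$. Since the argument for item (2) uses only that the multiplier is an element of $E^u$, it applies to $N$ and yields $N(x_\alpha-x)\to 0$ in $T$-conditional probability; domination through $T(|z_\alpha|\wedge u)\le T(|N(x_\alpha-x)|\wedge u)$ then transfers this convergence to $|x_\alpha|^p-|x|^p$, completing item (3).
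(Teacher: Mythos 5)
Your proposal is correct in all three items. Items (1) and (3) follow the paper's route essentially verbatim: the same chain $\left\vert T(|x_{\alpha}|\wedge u)-T(|x|\wedge u)\right\vert \le T(|x_{\alpha}-x|\wedge u)$ for (1), and for (3) the same mean-value bound $|a^{p}-b^{p}|\le p|a-b|(a\vee b)^{p-1}$ followed by an appeal to item (2) with the multiplier $pM^{p-1}$ (you are right, incidentally, to note that (2) must be applicable to multipliers in $E^{u}$, not just $E$; the paper uses this tacitly). Where you genuinely diverge is item (2). The paper sets $A=M\vee e$, passes to the equivalent criterion $T(|x_{\alpha}-x|\wedge e)\to 0$ of Lemma \ref{UI-B}, and splits the \emph{multiplier} as $A=(A\wedge ke)+(A-A\wedge ke)$ to get $T(M|x_{\alpha}-x|\wedge e)\le T(A-A\wedge ke)+kT(|x_{\alpha}-x|\wedge e)$, killing the first term by order continuity of $T$ since $A-A\wedge ke\downarrow 0$. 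You instead work with the band-projection form of the definition and split the carrier band of $(|My_{\alpha}|-\varepsilon e)^{+}$ according to whether $|M|>ne$, which requires the multiplicativity of band projections in the $f$-algebra and a separate Archimedean argument to see that $P_{(|M|-ne)^{+}}e\downarrow 0$. Both proofs rest on the identical double-limit-in-order skeleton (let $\alpha$ run first, then the truncation level), so they are equally valid; the paper's version is a little shorter because $T(A-A\wedge ke)\downarrow 0$ comes for free from order continuity, with no carrier-band computation, while yours has the mild advantage of never leaving the raw $\varepsilon$-definition of convergence in $T$-conditional probability.
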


\begin{proof}
(i) The desired result follows from the following inequalities:%
\[
\left\vert T\left(  \left\vert x_{\alpha}\right\vert \wedge u\right)
-T\left(  \left\vert x\right\vert \wedge u\right)  \right\vert \leq
T\left\vert \left\vert x_{\alpha}\right\vert \wedge u-\left\vert x\right\vert
\wedge u\right\vert \leq T\left(  \left\vert x_{\alpha}-x\right\vert \wedge
u\right)  .
\]

(ii) Let $A=M\vee e$ and $k\in\mathbb{N}.$ Then%
\begin{align*}
T\left(  M\left\vert x_{\alpha}-x\right\vert \wedge e\right)   &  \leq
T\left(  A\left\vert x_{\alpha}-x\right\vert \wedge A\right)  =TA\left(
\left\vert x_{\alpha}-x\right\vert \wedge e\right) \\
&  =T\left[  \left(  A-A\wedge ke\right)  \left(  \left\vert x_{\alpha
}-x\right\vert \wedge e\right)  \right]  T\left[  \left(  A\wedge ke\right)
\left(  \left\vert x_{\alpha}-x\right\vert \wedge e\right)  \right] \\
&  \leq T\left(  A-A\wedge ke\right)  +kT\left(  \left\vert x_{\alpha
}-x\right\vert \wedge e\right)
\end{align*}
We deuce that
\[
\limsup\limits_{\alpha}T\left(  M\left\vert x_{\alpha}-x\right\vert \wedge
e\right)  \leq T\left(  A-A\wedge ke\right)  .
\]
As this happens for every $k$ we get $\limsup\limits_{\alpha}T\left(
M\left\vert x_{\alpha}-x\right\vert \wedge e\right)  =0,$ which gives the
desired result.

(iii) We will use the following inequality, which is true for all $a,b\in
E_{+},$%
\[
\left\vert a^{p}-b^{p}\right\vert \leq p\left\vert a-b\right\vert \left(
a\vee b\right)  ^{p-1}.
\]
Let $M$ be any positive element satisfying $\left\vert x_{\alpha}\right\vert
\leq M$ for all $\alpha$ and $\left\vert x\right\vert \leq M.$ Then
\[
\left\vert \left\vert x_{\alpha}\right\vert ^{p}-\left\vert x\right\vert
^{p}\right\vert \leq p\left\vert x_{\alpha}-x\right\vert M.
\]
Hence%
\[
T\left\vert \left\vert x_{\alpha}\right\vert ^{p}-\left\vert x\right\vert
^{p}\right\vert \wedge e\leq Tp\left\vert x_{\alpha}-x\right\vert M\wedge e,
\]
and it follows from (ii) that $T\left\vert \left\vert x_{\alpha}\right\vert
^{p}-\left\vert x\right\vert ^{p}\right\vert \wedge e\overset{o}%
{\longrightarrow}0,$ which gives (iii).
\end{proof}

\section{Uniform integrability in vector lattices}

The notion of uniform integrability has been extended to the setting of vector
lattices to $T$-uniformity. The following definition is taken from \cite{L-04}.

\begin{definition}
\label{UI-A}A family $A$\ in $L^{1}\left(  T\right)  $\ is said to be
$T$-\textit{uniform}\ if%
\[
\sup\limits_{x\in A}\{TP_{(|x|-ce)^{+}}|x|\rightarrow0\text{ as }%
c\rightarrow\infty.
\]

\end{definition}

Every $T$-uniform family $A$ is $T$-bounded in $L^{1}\left(  T\right)  $ (see
\cite[Lemma 2.7]{L-04}), meaning that $\sup\limits_{a\in A}T\left\vert
x\right\vert $ exists in $L^{1}\left(  T\right)  .$ However, the converse is
not true. In classical probability theory, we use an alternative definition of
uniform integrability, known as the `epsilon-delta' definition. The following
result is widely recognized as useful.

\begin{proposition}
Let $\left(  \Omega,\mathcal{F},\mathbb{\lambda}\right)  $ be a probability
triple and $\mathcal{A}$ a family of random variables in $L_{1}=L_{1}\left(
\Omega,\mathcal{F},\mathbb{\lambda}\right)  .$ Then the following statements
are equivalent.

\begin{enumerate}
\item[(i)] The family $\mathcal{F}$ is uniform integrable, that is,
$\sup\limits_{X\in\mathcal{A}}\mathbb{E}\left(  \left\vert X\right\vert
\mathbf{1}_{\left\{  \left\vert X\right\vert \geq n\right\}  }\right)
\longrightarrow0$ as $n\longrightarrow\infty;$

\item[(ii)] $\mathcal{A}$ is norm bounded in $L_{1}$ and for every
$\varepsilon>0$ there exists $\delta>0$ such that%
\[
\lambda\left(  A\right)  <\delta\Longrightarrow\mathbb{E}\left(  \left\vert
X\right\vert \mathbf{1}_{A}\right)  <\varepsilon\text{ }\forall X\in
\mathcal{A}.
\]

\end{enumerate}
\end{proposition}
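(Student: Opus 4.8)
The plan is to prove the two implications separately, relying throughout on the elementary decomposition of $\mathbb{E}(|X|\mathbf{1}_A)$ according to whether $|X|$ is above or below a threshold, combined with Markov's inequality to control the measure of the level sets $\{|X|\geq n\}$ uniformly in $X$.

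First, for (i)$\Rightarrow$(ii), I would obtain the $L_1$-bound by writing, for a fixed $n$,
\[
\mathbb{E}|X| = \mathbb{E}\bigl(|X|\mathbf{1}_{\{|X|<n\}}\bigr) + \mathbb{E}\bigl(|X|\mathbf{1}_{\{|X|\geq n\}}\bigr) \leq n + \sup_{Y\in\mathcal{A}}\mathbb{E}\bigl(|Y|\mathbf{1}_{\{|Y|\geq n\}}\bigr),
\]
and then choosing $n$ large enough, using uniform integrability, that the supremum is at most $1$; this yields $\sup_{X\in\mathcal{A}}\mathbb{E}|X|\leq n+1$. For the epsilon--delta part, given $\varepsilon>0$ I would pick $n$ with $\sup_{X\in\mathcal{A}}\mathbb{E}(|X|\mathbf{1}_{\{|X|\geq n\}})<\varepsilon/2$ and split an arbitrary event $A$ as
\[
\mathbb{E}(|X|\mathbf{1}_A) \leq n\,\lambda(A) + \mathbb{E}\bigl(|X|\mathbf{1}_{\{|X|\geq n\}}\bigr) \leq n\,\lambda(A) + \tfrac{\varepsilon}{2},
\]
so that $\delta=\varepsilon/(2n)$ does the job.

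For the converse (ii)$\Rightarrow$(i), set $M=\sup_{X\in\mathcal{A}}\mathbb{E}|X|<\infty$. Markov's inequality gives the uniform estimate $\lambda(\{|X|\geq n\})\leq M/n$ for every $X\in\mathcal{A}$. Given $\varepsilon>0$, take $\delta>0$ from the epsilon--delta hypothesis and choose any $n>M/\delta$; then $\lambda(\{|X|\geq n\})<\delta$ simultaneously for all $X$, and applying the hypothesis to the event $A=\{|X|\geq n\}$ yields $\mathbb{E}(|X|\mathbf{1}_{\{|X|\geq n\}})<\varepsilon$ for every $X\in\mathcal{A}$, which is precisely the uniform integrability statement.

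No step here is genuinely difficult; the argument is entirely elementary and self-contained. The only point that deserves attention is the role of the norm-boundedness hypothesis in the converse: it is exactly what renders the Markov bound on $\lambda(\{|X|\geq n\})$ uniform over the whole family, so that a single threshold $n$ simultaneously forces all the tail integrals below $\varepsilon$. Without this uniform control over the level sets, the epsilon--delta condition alone would not suffice to make the tails vanish uniformly, which is why both clauses appear in (ii).
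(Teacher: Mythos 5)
Your proof is correct and is the standard textbook argument for this classical equivalence (splitting $\mathbb{E}(|X|\mathbf{1}_A)$ at a level $n$ and using Markov's inequality for the converse). The paper states this proposition without proof, as known background from classical probability, so there is no in-paper argument to compare against; your two implications are exactly the usual ones and contain no gaps.
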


There is no `epsilon-delta' version of uniform integrability in the setting of
Riesz space theory. However, the concept can still be expressed in a different
manner. It is worth noting that property (ii) mentioned earlier can be
reformulated and comprehended in the following way:%
\[
\sup\mathbb{E}\left(  \left\vert X\right\vert \mathbf{1}_{A}\right)
\longrightarrow0\text{ as }\mu\left(  A\right)  \longrightarrow0,
\]
or also,%
\begin{equation}
\sup\mathbb{E}\left(  \left\vert X\right\vert \mathbf{1}_{A}\right)
\longrightarrow0\text{ as }\mathbf{1}_{A}\longrightarrow0,
\end{equation}
where the latter means the convergence is almost surely. It is easy to see
that if $\left(  A_{n}\right)  $ is a sequence of events then the three kinds
of convergence - in probability, in $L^{1}$-norm and almost surely - coincide
for the sequence $\left(  \mathbf{1}_{A_{n}}\right)  $. This means that, in
language of vector lattices, the sequence of band projections $P_{n}%
:Y\longmapsto Y\mathbf{1}_{A_{n}}$ acting on $L_{1}\left(  \Omega
,\mathcal{F},\mathbb{P}\right)  $ is order null. Hence, an epsilon-delta
definition of uniform integrability in the context of vector lattices can be
conveniently reformulated in terms of band projections, as shown in
\cite{L-14}, which is summarized in the following result.

\begin{proposition}
Let $\left(  E,e,T\right)  $ be a Riesz triple and $A$ a subset of $E.$ Then
the following statements are equivalent.

\begin{enumerate}
\item[(i)] The family $\mathcal{A}$ is $T$-uniform;

\item[(ii)] For every net $\left(  P_{\alpha}\right)  $ is an order null net
of band projections we have $\sup\limits_{x\in A}P_{\alpha}\left\vert
x\right\vert \longrightarrow0$ in order.
\end{enumerate}
\end{proposition}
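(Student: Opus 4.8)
The plan is to prove the two implications separately, translating the definition of $T$-uniformity into the language of band projections via the correspondence between the weighted truncation $TP_{(|x|-ce)^{+}}|x|$ and an order null net of band projections.

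\textbf{From (i) to (ii).} Suppose $A$ is $T$-uniform and let $(P_\alpha)$ be an order null net of band projections, so $P_\alpha e \overset{o}{\longrightarrow} 0$. The key observation is that for any fixed threshold $c>0$ and any $x$, I can split the projected element as
\[
P_\alpha |x| = P_\alpha\bigl(|x|\wedge ce\bigr) + P_\alpha\bigl(|x| - |x|\wedge ce\bigr).
\]
The second term is dominated by $(|x|-ce)^+$, hence by $P_{(|x|-ce)^+}|x|$, so its supremum over $x\in A$ is controlled by the $T$-uniformity tail (after applying $T$; note that since $P_\alpha$ and $T$ interact through the projections, one works with the estimate $\sup_{x\in A} P_\alpha\bigl(|x|-|x|\wedge ce\bigr) \leq \sup_{x\in A}(|x|-ce)^+ \leq \sup_{x\in A} P_{(|x|-ce)^+}|x|$, which $T$-uniformity forces to be small for large $c$). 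The first term is bounded above by $c\,P_\alpha e$, which tends to $0$ in order because $(P_\alpha)$ is order null. Thus I would fix $\varepsilon$-room by first choosing $c$ large to kill the tail uniformly in $x$, then let $\alpha$ run so that $c\,P_\alpha e \to 0$; an order-convergence version of a standard $\tfrac{\varepsilon}{2}+\tfrac{\varepsilon}{2}$ argument, phrased through an order-bounded dominating net, yields $\sup_{x\in A} P_\alpha |x| \overset{o}{\longrightarrow} 0$.

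\textbf{From (ii) to (i).} For the converse I would argue by contraposition or by directly constructing, from the band-projection condition, the tail estimate in Definition~\ref{UI-A}. The natural candidate projections are $P_\alpha = P_{(|x_\alpha|-c_\alpha e)^+}$ built along a net where $c_\alpha \to \infty$; the point is that as $c\to\infty$ the band projections $P_{(|x|-ce)^+}$ form (uniformly over the relevant elements) an order null net, since $P_{(|x|-ce)^+}e \to 0$ in order as $c\to\infty$ for each $x\in L^1(T)$. Applying the hypothesis (ii) to this net then returns $\sup_{x\in A} P_{(|x|-ce)^+}|x| \to 0$, which after applying the positive operator $T$ is exactly $T$-uniformity. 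The delicate point here is that the supremum over $x\in A$ must be coordinated with the single net of thresholds, so I expect to invoke the $T$-boundedness of $A$ (guaranteed for $T$-uniform families, and needed to ensure the relevant suprema exist in $L^1(T)$) to make the truncations simultaneously order null.

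\textbf{Main obstacle.} The hard part will be handling the interchange of the supremum $\sup_{x\in A}$ with the limiting operations, since order convergence is not topological and suprema of nets need not behave well under order limits; in particular I must ensure every supremum appearing actually exists in the Dedekind complete space $L^1(T)$ and that the dominating estimates hold uniformly in $x$ rather than merely pointwise. I expect to lean on the existence of $\sup_{x\in A} T|x|$ from $T$-boundedness and on the order continuity of $T$ to push limits through, and to phrase the two-term splitting via an order-bounded dominating sequence so that the $\varepsilon/2$ splitting becomes a legitimate order-convergence statement rather than a topological one.
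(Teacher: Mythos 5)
The paper gives no proof of this proposition: it is stated as a summary of \cite[Theorem 4.2]{L-14}, so there is no internal argument to compare yours against. Note also that, as printed, item (ii) appears to be missing an application of $T$: the condition actually used later in the paper (in the proofs of Theorems \ref{Th1} and \ref{ThB}) is $\sup_{x\in A}TP_{\alpha}\left\vert x\right\vert \overset{o}{\longrightarrow}0$, matching the classical statement $\sup_{X}\mathbb{E}\left(  \left\vert X\right\vert \mathbf{1}_{A_{n}}\right)  \rightarrow0$. This discrepancy matters for your argument in both directions.

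For (i) $\Rightarrow$ (ii), your splitting $P_{\alpha}\left\vert x\right\vert =P_{\alpha}\left(  \left\vert x\right\vert \wedge ce\right)  +P_{\alpha}\left(  \left\vert x\right\vert -ce\right)  ^{+}$ is the right one, but the estimate you invoke sits at the wrong level: $T$-uniformity controls $\sup_{x}TP_{\left(  \left\vert x\right\vert -ce\right)  ^{+}}\left\vert x\right\vert$, not $\sup_{x}P_{\left(  \left\vert x\right\vert -ce\right)  ^{+}}\left\vert x\right\vert$, and since $T$ averages rather than preserves suprema the latter is not dominated by the former. Indeed, for the unit ball of $L^{2}[0,1]$ inside $L^{1}[0,1]$ (a uniformly integrable family) the supremum $\sup_{x}P_{\left(  \left\vert x\right\vert -ce\right)  ^{+}}\left\vert x\right\vert$ does not even exist, so the $T$-free version of (ii) is false as written; your argument only closes if you apply $T$ to every term of the splitting, after which $c\,TP_{\alpha}e\overset{o}{\longrightarrow}0$ by order continuity of $T$ and the tail term is handled by $T$-uniformity. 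The more serious gap is (ii) $\Rightarrow$ (i): the projections $P_{\left(  \left\vert x\right\vert -ce\right)  ^{+}}$ depend on $x$, and the family $\{P_{\left(  \left\vert x\right\vert -ce\right)  ^{+}}:x\in A\}$ need not be order null uniformly in $x$ as $c\rightarrow\infty$ (again for the $L^{2}$-ball, for each fixed $c$ the bands generated by $\left(  \left\vert x\right\vert -ce\right)  ^{+}$, $x\in A$, together cover the whole space), so hypothesis (ii) cannot be applied to the net you propose. You correctly identify this coordination problem as the delicate point, but flagging it is not resolving it; that is where the actual content of the equivalence lies. The route taken in \cite{L-14} exploits Markov's inequality, $TP_{\left(  \left\vert x\right\vert -ce\right)  ^{+}}e\leq c^{-1}T\left\vert x\right\vert \leq c^{-1}\sup_{y\in A}T\left\vert y\right\vert$, which makes the relevant projections small uniformly in $x$ in the $T$-weighted sense via the $T$-boundedness of $A$. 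As it stands, your second implication is a plan rather than a proof.
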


In \cite{L-180}, it was shown that every bounded family in $L^{p}\left(
T\right)  ,$ where $1<p<\infty,$ is $T$-uniform. This result is instrumental
in establishing a vector lattice variant of the Vitali Theorem, which is
essential in proving Theorem \ref{ThB} below. The next theorem furnishes a
sufficient condition for a family to be $T$-uniform.

\begin{theorem}
\label{T1}\textit{Let} $\left(  E,e,T\right)  $ \textit{be a Riesz conditional
triple and }$\mathcal{F}$ be a family of conditional expectation operators
such that $TS=T$ for every $S\in\mathcal{F}$ and $f\in E.$ \textit{Then the
family} $\{Sf:S\in\mathcal{F}\}$ is $T$-uniform.
\end{theorem}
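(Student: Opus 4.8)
The plan is to verify the defining inequality for $T$-uniformity directly, namely that $\sup_{S\in\mathcal{F}}TP_{(|Sf|-ce)^{+}}|Sf|\longrightarrow 0$ in order as $c\to\infty$, by transferring to this setting the classical fact that the conditional expectations of a single integrable element form a uniformly integrable family. Throughout I fix $f\in E$, write $A=\{Sf:S\in\mathcal{F}\}$, and abbreviate by $p_{c}^{S}:=P_{(|Sf|-ce)^{+}}e$ the component of $e$ implementing the band projection $P_{(|Sf|-ce)^{+}}$ as a multiplier in the $f$-algebra $L^{1}(T)^{u}$. The two structural facts driving the argument are the contraction estimate $|Sf|\le S|f|$, valid because each $S$ is positive, and the hypothesis $TS=T$, which will play the role of the tower property.

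The key step is an averaging identity. Since $Sf\in R(S)$ and $e\in R(S)$, the element $(|Sf|-ce)^{+}$ lies in $R(S)$, and because $R(S)$ is a Dedekind complete Riesz subspace closed under the formation of components of $e$, the projection component $p_{c}^{S}$ also lies in $R(S)$. Consequently $p_{c}^{S}$ is a multiplier fixed by $S$, so the averaging property of the conditional expectation yields $p_{c}^{S}\,S|f|=S(p_{c}^{S}|f|)$. Combining this with $|Sf|\le S|f|$ and $TS=T$ gives
\[
TP_{(|Sf|-ce)^{+}}|Sf|=T\bigl(p_{c}^{S}|Sf|\bigr)\le T\bigl(p_{c}^{S}S|f|\bigr)=TS\bigl(p_{c}^{S}|f|\bigr)=T\bigl(p_{c}^{S}|f|\bigr),
\]
so the whole family is dominated by the single element $|f|$ weighted by the projections $p_{c}^{S}$, exactly as in the measure-theoretic proof.

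It then remains to show $\sup_{S}T(p_{c}^{S}|f|)\to 0$ in order. First a Markov-type bound: disjointness of $(|Sf|-ce)^{+}$ and $(|Sf|-ce)^{-}$ forces $P_{(|Sf|-ce)^{+}}|Sf|=(|Sf|-ce)^{+}+c\,p_{c}^{S}\ge c\,p_{c}^{S}$, whence $c\,p_{c}^{S}\le|Sf|\le S|f|$ and therefore $Tp_{c}^{S}\le \tfrac1c\,TS|f|=\tfrac1c\,T|f|$ for every $S$. Now truncate: for any $k\in\mathbb{N}$,
\[
T\bigl(p_{c}^{S}|f|\bigr)=T\bigl(p_{c}^{S}(|f|\wedge ke)\bigr)+T\bigl(p_{c}^{S}(|f|-ke)^{+}\bigr)\le k\,Tp_{c}^{S}+T(|f|-ke)^{+}\le \frac{k}{c}\,T|f|+T(|f|-ke)^{+},
\]
uniformly in $S$. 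Hence $g(c):=\sup_{S}TP_{(|Sf|-ce)^{+}}|Sf|$, which exists in the Dedekind complete space $R(T)$ since each term is dominated by $T|f|$ and which is decreasing in $c$, satisfies $g(c)\le \tfrac{k}{c}T|f|+T(|f|-ke)^{+}$. As the right-hand side decreases to $T(|f|-ke)^{+}$ when $c\to\infty$, we get $\inf_{c}g(c)\le T(|f|-ke)^{+}$ for every $k$; finally $(|f|-ke)^{+}\downarrow 0$ as $k\to\infty$, so order continuity of $T$ forces $T(|f|-ke)^{+}\downarrow 0$ and thus $g(c)\downarrow\inf_{c}g(c)=0$, which is precisely $T$-uniformity of $A$.

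I expect the only delicate point to be the averaging step, specifically the assertion that the band-projection component $p_{c}^{S}$ belongs to $R(S)$ so that $S$ commutes with multiplication by it; this is the exact analogue of the classical observation that $\{|Sf|\ge c\}$ is measurable with respect to the sub-$\sigma$-algebra fixed by $S$, and it is where the structure of $R(S)$ as a conditional-expectation range is genuinely used. The concluding double limit in $c$ and $k$ is routine once it is phrased as the infimum computation above in the Dedekind complete range $R(T)$ rather than as an iterated topological limit.
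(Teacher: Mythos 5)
Your proof is correct and follows essentially the same route as the paper's: the Markov-type bound $c\,p_c^S\le S|f|$, the averaging identity $T(p_c^S\,S|f|)=TS(p_c^S|f|)=T(p_c^S|f|)$ resting on $p_c^S\in R(S)$ together with $TS=T$, and a truncation of $|f|$ at level $k$ followed by the iterated limit in $c$ and $k$. If anything, your handling of the tail term is more careful: where the paper bounds the untruncated part by $K^{-1}T|x|$ (which does not follow from Markov and appears to be a slip), you correctly keep it as $T\left(\left(|f|-ke\right)^{+}\right)$ and send it to $0$ by order continuity of $T$.
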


\begin{proof}
By Markov's inequality we have%
\[
P_{\left(  \left\vert Sx\right\vert -\alpha e\right)  ^{+}}e\leq\alpha
^{-1}\left\vert Sx\right\vert \leq\alpha^{-1}S\left\vert x\right\vert .
\]
It follows that%
\[
TP_{\left(  \left\vert Sx\right\vert -\alpha e\right)  ^{+}}e\leq\alpha
^{-1}TS\left\vert x\right\vert \alpha^{-1}=T\left\vert x\right\vert .
\]
Hence%
\[
\sup\limits_{S\in\mathcal{F}}TP_{\left(  \left\vert Sx\right\vert -\alpha
e\right)  ^{+}}e\leq\alpha^{-1}T\left\vert x\right\vert .
\]
Now
\begin{align*}
TP_{\left(  \left\vert Sx\right\vert -\alpha e\right)  ^{+}}\left\vert
Sx\right\vert  &  \leq TP_{\left(  \left\vert Sx\right\vert -\alpha e\right)
^{+}}S\left\vert x\right\vert =TSP_{\left(  \left\vert Sx\right\vert -\alpha
e\right)  ^{+}}\left\vert x\right\vert =TP_{\left(  \left\vert Sx\right\vert
-\alpha e\right)  ^{+}}\left\vert x\right\vert \\
&  =TP_{\left(  \left\vert Sx\right\vert -\alpha e\right)  ^{+}}\left(
P_{\left(  \left\vert x\right\vert -Ke\right)  ^{+}}\left\vert x\right\vert
+P_{\left(  \left\vert x\right\vert -Ke\right)  ^{+}}^{d}\left\vert
x\right\vert \right) \\
&  \leq K^{-1}T\left\vert x\right\vert +KTP_{\left(  \left\vert Sx\right\vert
-\alpha e\right)  ^{+}}e\leq K^{-1}T\left\vert x\right\vert +\dfrac
{KT\left\vert x\right\vert }{\alpha}.
\end{align*}
So%
\[
\limsup\limits_{\alpha\longrightarrow\infty}\sup\limits_{S\in\mathcal{F}%
}TP_{\left(  \left\vert Sx\right\vert -\alpha e\right)  ^{+}}\left\vert
Sx\right\vert \leq K^{-1}T\left\vert x\right\vert .
\]
\newline Hence
\[
\lim\limits_{\alpha\longrightarrow\infty}\sup\limits_{S\in\mathcal{F}%
}TP_{\left(  \left\vert Sx\right\vert -\alpha e\right)  ^{+}}\left\vert
Sx\right\vert =0,
\]
and the result follows.
\end{proof}

De La Vall\'{e} Poussin's classical theorem offers a characterization of
uniform integrability.

\begin{theorem}
\label{ThA}Let $\mathcal{H}$ be a family in $L^{1}\left(  \mathbb{P}\right)
.$ Then the following statements are equivalent:

\begin{enumerate}
\item[(i)] The family $\mathcal{H}$ is uniform integrable.

\item[(ii)] There exists a convex function $\varphi:[0,\infty)\longrightarrow
\lbrack0,\infty)$ such that
\[
\lim\limits_{x\longrightarrow\infty}\dfrac{\varphi\left(  x\right)  }%
{x}=\infty\text{ and }\sup\limits_{X\in\mathcal{H}}\mathbb{E}\left(
\varphi\left(  \left\vert X\right\vert \right)  \right)  <\infty.
\]

\end{enumerate}
\end{theorem}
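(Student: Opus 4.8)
The plan is to prove the two implications separately. The implication $(ii)\Rightarrow(i)$ is routine and uses only the superlinear growth of $\varphi$, while the reverse implication $(i)\Rightarrow(ii)$ is the substantive one and requires an explicit construction of the test function. For $(ii)\Rightarrow(i)$, I would assume a convex $\varphi$ with $\varphi(x)/x\to\infty$ and set $M:=\sup_{X\in\mathcal{H}}\mathbb{E}(\varphi(|X|))<\infty$. Given $\varepsilon>0$, the growth condition lets me choose $R$ so large that $x\leq\varepsilon\,\varphi(x)$ whenever $x\geq R$. Then on the event $\{|X|\geq R\}$ one has $|X|\leq\varepsilon\,\varphi(|X|)$, so that
\[
\mathbb{E}\left(|X|\,\mathbf{1}_{\{|X|\geq R\}}\right)\leq\varepsilon\,\mathbb{E}\left(\varphi(|X|)\right)\leq\varepsilon M
\]
uniformly over $X\in\mathcal{H}$; letting $\varepsilon\to 0$ gives uniform integrability. (Convexity is not needed here, only superlinearity.)

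For $(i)\Rightarrow(ii)$, I would exploit that uniform integrability means $\sup_{X}\mathbb{E}(|X|\,\mathbf{1}_{\{|X|\geq c\}})\to 0$ as $c\to\infty$ to select an increasing sequence of reals $0=c_0<c_1<c_2<\cdots\to\infty$ with
\[
\sup_{X\in\mathcal{H}}\mathbb{E}\left(|X|\,\mathbf{1}_{\{|X|\geq c_n\}}\right)\leq 2^{-n}\quad(n\geq 1).
\]
I then define the nondecreasing step function $g(t)=\#\{n\geq 1:c_n\leq t\}=\sum_{n\geq 1}\mathbf{1}_{[c_n,\infty)}(t)$, which tends to $\infty$, and put $\varphi(x)=\int_0^x g(t)\,dt$. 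Because $g$ is nondecreasing, $\varphi$ is convex, and the estimate $\varphi(x)\geq\int_{x/2}^{x}g\geq(x/2)\,g(x/2)$ gives $\varphi(x)/x\geq g(x/2)/2\to\infty$. Finally, by Tonelli's theorem together with the pointwise identity $\int_0^{|X|}\mathbf{1}_{[c_n,\infty)}(t)\,dt=(|X|-c_n)^+\leq|X|\,\mathbf{1}_{\{|X|\geq c_n\}}$, I obtain
\[
\mathbb{E}\left(\varphi(|X|)\right)=\sum_{n\geq 1}\mathbb{E}\left((|X|-c_n)^{+}\right)\leq\sum_{n\geq 1}\mathbb{E}\left(|X|\,\mathbf{1}_{\{|X|\geq c_n\}}\right)\leq\sum_{n\geq 1}2^{-n}=1,
\]
uniformly in $X\in\mathcal{H}$, producing the desired $\varphi$.

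The main obstacle is the construction in $(i)\Rightarrow(ii)$: everything hinges on choosing the thresholds $c_n$ to grow fast enough that the series defining $\mathbb{E}(\varphi(|X|))$ converges \emph{uniformly} over the family, and on recognizing that integrating the counting step function $g$ yields a convex, superlinear $\varphi$ whose expectation telescopes into a sum of truncated tails controlled by uniform integrability. The technical points to verify are the interchange of summation and expectation (Tonelli) and the lower growth estimate for $\varphi$; both are elementary once the right $\varphi$ has been isolated, so the real content lies in the construction itself rather than in the estimates.
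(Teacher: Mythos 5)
Your proof is correct: both directions are the standard argument for the classical de La Vall\'ee Poussin theorem, and the construction of $\varphi$ as the integral of the counting function $g(t)=\#\{n\geq 1: c_n\leq t\}$, with the tail sums controlled by $\sum 2^{-n}$ via Tonelli, is exactly the textbook route. Note, however, that the paper offers no proof of this theorem at all --- it is quoted as a known classical result (see e.g.\ the reference to Chandra's paper in the bibliography) and serves only as the backdrop against which the author then shows, by counterexample, that the implication (i) $\Rightarrow$ (ii) fails in the Riesz space setting; so there is no proof in the paper to compare yours against, but yours would serve as a complete and correct proof of the classical statement.
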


The next example demonstrates that the implication (i) \U{21d2} (ii) does not
hold in the context of a Riesz space equipped with a conditional expectation operator.

\begin{example}
Let $E$ be the Riesz space consisting of all functions $f:[1,\infty
\lbrack\longrightarrow\mathbb{R}$ such that $f\left(  x\right)  =O\left(
x\right)  $ as $x\longrightarrow\infty.$ This means that the functions in E
grow at most linearly as x approaches infinity. It is easily seen that $E$ is
a Dedekind complete Riesz space with weak order unit $e,$ where $e\left(
t\right)  =1$ for all $t\in\lbrack1,\infty).$ Let $T:E\longrightarrow E$ be
the identity map and $\mathcal{F}=\left\{  u\right\}  $ with $u\left(
x\right)  =x.$ Then $T$ is a conditional expectation and $\mathcal{F}$ is
$T$-uniform. If $\varphi$ is a test function, which means that $\varphi$ is a
convex function on $[1,\infty)$ and $\lim\dfrac{\varphi\left(  x\right)  }%
{x}=\infty$) then $\varphi\left(  u\right)  \in E^{s}\diagdown E.$
\end{example}

It is worth noting that in the previous example $\varphi\left(  u\right)  \in
E^{u},$ the universal completion of $E,$ and even better $\varphi\left(
u\right)  \in L^{1}\left(  T\right)  ,$ the natural domain of $T.$ This means
that condition (ii) is indeed fulfilled if we work in the space $L^{1}\left(
T\right)  .$ One might then expect that Theorem \ref{ThA} holds in the general
situation for $T$-uniform family. The following more involved example shows,
however, that no such expected result can exist.

\begin{example}
Let $A$ denotes the set of all convex functions $\varphi:[0,\infty
)\longrightarrow\lbrack0,\infty)$ satisfying $\lim\limits_{x\longrightarrow
\infty}\dfrac{\varphi\left(  x\right)  }{x}=\infty$ and consider the product
space $E^{A}$ where $E=L^{1}\left[  0,1\right]  .$ Now for each $\varphi\in A$
one can find an element $X_{\varphi}\in L^{1}\left[  0,1\right]  $ such that%
\[
\int_{\left\{  \left\vert X_{\varphi}\right\vert \geq n\right\}  }\left\vert
X_{\varphi}\right\vert d\mu\leq\dfrac{1}{n}\text{ for all }n=1,2,...
\]
and such that $\int\varphi\left(  \left\vert X_{\varphi}\right\vert \right)
d\mu=\infty.$ Now for each $\varphi$ let $Y^{\varphi}\in L^{1}\left(
T\right)  $ be defined as follows: $Y^{\varphi}\left(  \psi\right)
=X_{\varphi}$ if $\psi=\varphi$ and $Y^{\varphi}\left(  \psi\right)  =0$
otherwise. It is easily seen that the family $\left(  Y^{\varphi}\right)
_{\varphi\in A}$ is $T$-uniform. However, there is no function $\varphi$ in
$A$ such that $\sup T\varphi\left(  \left\vert Y\right\vert \right)  $ exists
in $L^{1}\left(  T\right)  .$
\end{example}

In the Riesz space setting, we can establish a positive result regarding
Theorem \ref{ThA}, which can be used to provide a quick proof of
\cite[Corollary 3.10]{L-180}.

\begin{theorem}
Let $\mathcal{F}$ be a family in $L^{1}\left(  T\right)  $ and $\varphi
:[0,\infty)\longrightarrow\lbrack0,\infty)$ be a convex function such that
$\lim\limits_{x\longrightarrow\infty}\dfrac{\varphi\left(  x\right)  }%
{x}=\infty.$ If $\sup T\varphi\left(  x\right)  $ exists in $L^{1}\left(
T\right)  $ then $\mathcal{F}$ is $T$-uniform.
\end{theorem}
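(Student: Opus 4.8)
The plan is to transport the elementary (``sufficiency'') direction of the classical de La Vall\'ee Poussin argument to the $f$-algebra $L^{1}(T)^{u}$, exploiting the superlinear growth of $\varphi$ to dominate $|x|$ by a small multiple of $\varphi(|x|)$ on the band where $|x|$ is large. Throughout I work inside the universally complete $f$-algebra $L^{1}(T)^{u}$ with identity $e$, in which the functional calculus $\varphi(|x|)$ is defined for every $x\in L^{1}(T)$ (exactly as the calculus $t\mapsto t^{p}$ was used in Lemma \ref{UI-C}). Set $g:=\sup_{x\in\mathcal{F}}T\varphi(|x|)$, which exists in $L^{1}(T)$ by hypothesis, so that $T\varphi(|x|)\le g$ for every $x\in\mathcal{F}$.

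First I would fix a scalar $M>0$ and use the growth condition $\lim_{t\to\infty}\varphi(t)/t=\infty$ to pick a threshold $c_{M}>0$ with $\varphi(t)\ge Mt$ for all $t\ge c_{M}$. The crucial step is then the operator inequality
\[
P_{(|x|-ce)^{+}}\,|x|\ \le\ \tfrac{1}{M}\,P_{(|x|-ce)^{+}}\,\varphi(|x|)\qquad(c\ge c_{M}),
\]
valid for every $x\in\mathcal{F}$. This is the scalar inequality $\varphi(t)\ge Mt$ (for $t\ge c_{M}$) read through the band projection $P_{(|x|-ce)^{+}}$: on the band generated by $(|x|-ce)^{+}$ the element $|x|$ dominates $ce\ge c_{M}e$, so the estimate transfers via functional calculus. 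Since $0\le P_{(|x|-ce)^{+}}\varphi(|x|)\le\varphi(|x|)$ and $T$ is positive, applying $T$ and bounding by $g$ gives $TP_{(|x|-ce)^{+}}|x|\le\tfrac{1}{M}T\varphi(|x|)\le\tfrac{1}{M}g$ for all $c\ge c_{M}$; taking the supremum over $x\in\mathcal{F}$ yields
\[
\sup_{x\in\mathcal{F}}TP_{(|x|-ce)^{+}}|x|\ \le\ \tfrac{1}{M}\,g\qquad(c\ge c_{M}).
\]

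To finish I would invoke the Archimedean property of $L^{1}(T)$: the map $c\mapsto\sup_{x\in\mathcal{F}}TP_{(|x|-ce)^{+}}|x|$ is decreasing and nonnegative, and the displayed bound shows its infimum is dominated by $\inf_{M}\tfrac{1}{M}g=0$. Hence this decreasing net order-converges to $0$ as $c\to\infty$, which is precisely the defining condition for $\mathcal{F}$ to be $T$-uniform in Definition \ref{UI-A}.

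The main obstacle I anticipate is the rigorous justification of the operator inequality: one must argue that the band projection onto $(|x|-ce)^{+}$ genuinely confines the estimate to the region where $|x|\ge ce$, so that the scalar inequality $\varphi(t)\ge Mt$ for $t\ge c_{M}$ applies componentwise. I would make this precise by passing to a Freudenthal / $C^{\infty}(X)$ representation of $L^{1}(T)^{u}$ as extended continuous functions on a Stonean space (with $e=\mathbf{1}$), in which $P_{(|x|-ce)^{+}}$ is multiplication by the indicator of the clopen set where $|x|>c$ and the functional calculus is ordinary composition; there both the inequality and the domination $P_{(|x|-ce)^{+}}\varphi(|x|)\le\varphi(|x|)$ are immediate. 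A secondary point to verify is that $\varphi(|x|)$ is a legitimate element of $L^{1}(T)^{u}$ for a merely convex $\varphi$ (continuity on $(0,\infty)$ suffices, the value at the origin being controlled by convexity), which the standing hypothesis that each $T\varphi(|x|)$ exists already guarantees.
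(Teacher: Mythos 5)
Your proposal is correct and follows essentially the same route as the paper's own proof: both arguments choose a threshold beyond which $\varphi(t)\ge Mt$, transfer this scalar inequality through the band projection $P_{(|x|-ce)^{+}}$ via functional calculus to get $TP_{(|x|-ce)^{+}}|x|\le M^{-1}\sup_{x}T\varphi(|x|)$, and then let $M\to\infty$ using the Archimedean property. The only cosmetic difference is that the paper phrases the key step as positivity of the function $t\mapsto(\varphi(t)-kt)\chi_{[\alpha,\infty)}(t)$ rather than via a Stonean representation.
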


\begin{proof}
Pick $k>0$ and choose $\alpha>0$ such that $t>\alpha\implies\varphi\left(
t\right)  \geq kt.$ This means that the function
\[
t\longmapsto\left(  \varphi\left(  t\right)  -kt\right)  \chi_{\lbrack
\alpha,\infty\lbrack}\left(  t\right)
\]
is positive. It follows that%
\[
kP_{\left(  x-\alpha e\right)  ^{+}}x\leq P_{\left(  x-\alpha e\right)  ^{+}%
}\varphi\left(  x\right)  \leq\varphi\left(  x\right)  .
\]
Using this we obtain
\[
kTP_{\left(  x-\alpha e\right)  ^{+}}x\leq P_{\left(  x-\alpha e\right)  ^{+}%
}\varphi\left(  x\right)  \leq T\varphi\left(  x\right)  .
\]
Therefore we have%
\[
\sup\limits_{\beta\geq\alpha}\sup\limits_{x\in\mathcal{F}}TP_{\left(  x-\alpha
e\right)  ^{+}}x\leq k^{-1}y
\]
where $y=\sup\limits_{x\in\mathcal{F}}T\varphi\left(  x\right)  .$ Hence
\[
\lim\limits_{\alpha\longrightarrow\infty}\sup\limits_{x\in\mathcal{F}%
}TP_{\left(  x-\alpha e\right)  ^{+}}x=0,
\]
which shows that $\mathcal{F}$ is $T$-uniform.
\end{proof}

\begin{corollary}
\cite[Corollary 3.10]{L-180}Let $\mathcal{F}$ be a family in $L^{1}\left(
T\right)  $ and $1<p<\infty.$ If $\mathcal{F}$ is bounded in $L^{p}\left(
T\right)  ,$ that is, $T\left\vert x\right\vert ^{p}\leq y$ for some $y\in
L^{p}\left(  T\right)  ,$ for all $x\in\mathcal{F},$ then $\mathcal{F}$ is $T$-uniform.
\end{corollary}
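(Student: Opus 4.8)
The plan is to deduce this corollary directly from the preceding theorem by specializing the test function to $\varphi(t)=t^{p}$. First I would check that this $\varphi$ meets the two requirements of that theorem: for $p>1$ the map $t\mapsto t^{p}$ is convex on $[0,\infty)$, and $\varphi(t)/t=t^{p-1}\longrightarrow\infty$ as $t\longrightarrow\infty$. Both are elementary and need no comment.

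The only substantive point is to verify that $\sup_{x\in\mathcal{F}}T\varphi(|x|)$ exists in $L^{1}(T)$, since that is the sole nontrivial hypothesis consumed by the theorem. With $\varphi(t)=t^{p}$ we have $T\varphi(|x|)=T|x|^{p}$, and the boundedness assumption of the corollary gives $T|x|^{p}\leq y$ for every $x\in\mathcal{F}$. I would then observe that each $T|x|^{p}$ lies in the range $R(T)$, which is a Dedekind complete Riesz subspace, so a subset of $R(T)$ that is bounded above admits a supremum there. To obtain a bound that lives inside $R(T)$, I would apply $T$ to $T|x|^{p}\leq y$: positivity of $T$ together with $T^{2}=T$ yields $T|x|^{p}=T(T|x|^{p})\leq Ty$, with $Ty\in R(T)$. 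Hence $\sup_{x\in\mathcal{F}}T|x|^{p}$ exists in $R(T)\subseteq L^{1}(T)$.

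Having verified both hypotheses, the theorem applies verbatim and yields that $\mathcal{F}$ is $T$-uniform, which finishes the proof. I do not expect a genuine obstacle here; the argument is essentially immediate once the right $\varphi$ is chosen. The only step meriting care is the existence of the supremum, and the hard part there is merely the bookkeeping of reducing the given bound $y\in L^{p}(T)$ to a bound $Ty$ inside $R(T)$ so that Dedekind completeness of $R(T)$ can be invoked.
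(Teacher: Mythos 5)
Your proposal is correct and matches the paper's intended argument: the paper states this corollary as an immediate consequence of the preceding theorem applied with $\varphi(t)=t^{p}$, which is exactly what you do. Your care over the existence of $\sup_{x\in\mathcal{F}}T|x|^{p}$ is reasonable, though one can also conclude directly from the Dedekind completeness of $L^{1}(T)$ and the upper bound $y$, without passing to $Ty\in R(T)$.
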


\section{Strong sequential completeness}

We will consider in this section a conditional Riesz triple $\left(
E,e,T\right)  $ as above. Recall that a net $\left(  x_{\alpha}\right)  $ in
$E$ is $T$-strongly convergent to $x$ if $x_{a}$ converges to $x$ with respect
to the $R\left(  T\right)  $-vector valued norm, that is, $T\left\vert
x_{\alpha}-x\right\vert \longrightarrow0$ in order. We define in a similar way
the notion of strongly Cauchy nets. We say that $E$ is strongly complete if
every strongly Cauchy net is strongly convergent. Similarly, we define
sequential strong completeness for sequences. In \cite{L-360} the authors
proved that the space $L^{1}\left(  T\right)  $ is sequentially strong
complete, that is, every strongly Cauchy sequence $\left(  f_{n}\right)  $ in
$L^{1}\left(  T\right)  $ is strongly convergent to some $f\in L^{1}\left(
T\right)  .$ The proof is not an adaptation of the classical case, but it is
also not feasible to adapt it to the case of $L^{p}\left(  T\right)  $ for
$p\in\left(  1,\infty\right)  .$ In this section, we will demonstrate how the
application of $T$-uniformity enables us to establish that the spaces
$L^{p}\left(  T\right)  $ are sequentially strongly complete.

The complete understanding of full completeness for nets remains unknown.
However, we do know that the space $L^{\infty}\left(  T\right)  $ is strongly
complete, and in this particular case, the analysis is relatively
straightforward (see \cite{L360}). To prove our result for $L^{p}\left(
T\right)  $ we will combine the result of Kuo, Rodda and Watson about
$L^{1}\left(  T\right)  $ and a generalization of Vitali Theorem in Riesz
spaces$.$ We need also another kind of convergence that generalizes the
convergence in probability. Its definitions and some of its basic properties
are presented in the second section. As every sequence is trivially locally
bounded, the following result is an immediate consequence of \cite[Theorem
5.5]{L-14}.

\begin{theorem}
\label{Th0}Let $\left(  E,e,T\right)  $ be a conditional Riesz triple and
$\left(  f_{n}\right)  $ a sequence in $L^{1}\left(  T\right)  .$ Then the
following statements are equivalent.

\begin{enumerate}
\item[(i)] The sequence $\left(  f_{n}\right)  $ converges $T$-strongly to
$f;$

\item[(ii)] The sequence $\left(  f_{n}\right)  $ is $T$-uniform and converges
to $f$ in $T$-conditional probability.
\end{enumerate}
\end{theorem}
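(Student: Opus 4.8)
The plan is to prove this as a vector-lattice version of Vitali's theorem, following the classical truncation strategy but replacing the $\varepsilon$--$\delta$ machinery (unavailable here) by Chebyshev/Markov estimates together with the bare definition of $T$-uniformity. Throughout I would write $g_n=|f_n-f|$ and, for a threshold $\varepsilon>0$, let $Q_n=P_{(g_n-\varepsilon e)^{+}}$ be the band projection onto the carrier of $(g_n-\varepsilon e)^{+}$, so that $Q_n^{d}g_n\le\varepsilon e$. Two preliminary facts are needed and I would record them first: every single $h\in L^{1}(T)$ is $T$-uniform, i.e.\ $TP_{(|h|-ce)^{+}}|h|\to0$ in order as $c\to\infty$ (this is order continuity of $T$ applied to $P_{(|h|-ce)^{+}}|h|\downarrow0$), and consequently every finite family is $T$-uniform.

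For the implication (i) $\Rightarrow$ (ii), convergence in $T$-conditional probability is immediate: $T(g_n\wedge e)\le Tg_n\to0$, so Lemma \ref{UI-B}(3) applies. The substantive point is $T$-uniformity of $(f_n)$. Here I would split the supremum $\phi(c)=\sup_n TP_{(|f_n|-ce)^{+}}|f_n|$ (which exists by $T$-boundedness) into a head $n\le N$ and a tail $n>N$. The head is a finite family, hence $T$-uniform, so it tends to $0$ as $c\to\infty$. For the tail I would estimate
\[
TP_{(|f_n|-ce)^{+}}|f_n|\le Tg_n+TP_{(|f_n|-ce)^{+}}|f|,
\]
bound $\sup_{n>N}Tg_n$ by a quantity $r_N$ decreasing to $0$ (the tail of an order-null sequence), and control the last term by truncating $|f|$ at level $K$ and invoking Chebyshev, $TP_{(|f_n|-ce)^{+}}e\le c^{-1}T|f_n|\le c^{-1}\sup_m T|f_m|$, to obtain a bound $\zeta(K)+Kc^{-1}\sup_m T|f_m|$ uniform in $n$. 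Letting $c\to\infty$, then $N,K\to\infty$, forces $\phi(c)\to0$.

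For the converse (ii) $\Rightarrow$ (i), I would first check $f\in L^{1}(T)$ by a Fatou argument: Lemma \ref{UI-C}(i) gives $T(|f_n|\wedge ce)\to T(|f|\wedge ce)\le\sup_m T|f_m|$, and $T$-universal completeness of $L^{1}(T)$ then yields $T|f|\le\sup_m T|f_m|$. The core estimate is the decomposition $Tg_n=TQ_n g_n+TQ_n^{d}g_n$ with $TQ_n^{d}g_n\le\varepsilon e$. On the remaining band I would write $TQ_n g_n\le TQ_n|f_n|+TQ_n|f|$ and, for each summand, truncate the function at level $c$: the part below $c$ contributes at most $c\,TQ_n e$, which tends to $0$ in $n$ because $TQ_n e=TP_{(g_n-\varepsilon e)^{+}}e\to0$ is exactly convergence in $T$-conditional probability; the part above $c$ is dominated by $TP_{(|f_n|-ce)^{+}}|f_n|\le\eta(c)$ (the $T$-uniformity modulus) for the first summand and by $TP_{(|f|-ce)^{+}}|f|$ for the second. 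Taking the order-limsup in $n$ and then $c\to\infty$ annihilates both tails, so $TQ_n g_n\to0$ and hence $Tg_n$ is eventually at most $\varepsilon e$; since $\varepsilon$ is arbitrary, $Tg_n\to0$ in order.

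The main obstacle, and the place where the Riesz-space setting genuinely departs from the classical one, is that the ``small'' bands $Q_n$ are \emph{not} order null but only satisfy $TQ_n e\to0$ in order. One cannot simply declare the relevant projections to vanish; instead the argument must pass through a two-level truncation — first in the size $c$ of the functions, controlled by $T$-uniformity, and then in $n$, controlled by Chebyshev together with convergence in $T$-conditional probability — and one must take care that every order-limsup manipulation is legitimate in the Dedekind complete space $L^{1}(T)$.
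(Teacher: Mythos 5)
Your proof is correct, and it is worth pointing out that the paper itself contains no proof of Theorem \ref{Th0}: the result is invoked as an immediate consequence of Theorem 5.5 of \cite{L-14} (every sequence being trivially locally bounded), so you have supplied a self-contained argument for a statement the author only cites. Measured against the nearest in-paper argument, namely the proof of the $L^{p}$ analogue Theorem \ref{Th1}, your route is genuinely different on the uniformity half of (i) $\Rightarrow$ (ii): you work straight from Definition \ref{UI-A} (the $c\to\infty$ tail condition), combining a finite head (finite families are $T$-uniform), the strong-Cauchy tail bound $\sup_{n>N}T|f_{n}-f|\le r_{N}\downarrow 0$, and the Chebyshev estimate $TP_{(|f_{n}|-ce)^{+}}e\le c^{-1}T|f_{n}|$, whereas the paper tests $T$-uniformity against an arbitrary order-null net of band projections $(P_{\alpha})$ via the equivalent characterization of \cite[Theorem 4.2]{L-14}. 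Your version needs no auxiliary characterization but pays with a two-level truncation in $c$ and $K$; the paper's version transfers more directly to $L^{p}$. Your converse direction, with $Tg_{n}=TQ_{n}g_{n}+TQ_{n}^{d}g_{n}$, $TQ_{n}^{d}g_{n}\le\varepsilon e$, and truncation of $|f_{n}|$ and $|f|$ at level $c$, is the standard Vitali scheme and is sound; the two steps that deserve explicit justification are the preliminary fact that $P_{(|h|-ce)^{+}}|h|\downarrow 0$ for a single $h$ (which needs the Archimedean argument $P_{(|h|-ce)^{+}}|h|\ge cP_{(|h|-ce)^{+}}e$ together with $e$ being a weak order unit, before order continuity of $T$ can be applied), and the membership $f\in L^{1}(T)$, which you correctly derive from $T$-universal completeness exactly as the paper does inside the proof of Theorem \ref{Th1}.
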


In \cite{L-180} we obtain a generalization of \cite[Theorem 5.5]{L-14} for
$L^{p}\left(  T\right)  .$ We state here a version for sequences and provide a
proof for sake of completeness (also some details in the proof in \cite{L-180}
need to be clarified).

\begin{theorem}
\label{Th1}Let $\left(  f_{n}\right)  $ be a sequence in $L^{p}\left(
T\right)  $ where $1\leq p<\infty$ then the following statements are equivalent.

\begin{enumerate}
\item[(i)] The sequence $\left(  f_{n}\right)  $ converges to $f$ in
$L^{p}\left(  T\right)  $ (with respect to the norm $\left\Vert .\right\Vert
_{p,T}$)$;$

\item[(ii)] The sequence $\left(  \left\vert f_{n}\right\vert ^{p}\right)  $
is $T$-uniform and converges to $f$ in $T$-conditional probability.
\end{enumerate}
\end{theorem}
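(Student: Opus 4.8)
The plan is to reduce the whole statement to the already-available $L^{1}$ version, Theorem \ref{Th0}, applied to the auxiliary sequence $h_{n}=|f_{n}-f|^{p}$, together with two elementary bookkeeping facts. First I would record the band identity $P_{(g^{p}-\delta e)^{+}}=P_{(g-\delta^{1/p}e)^{+}}$, valid for every $g\in E_{+}^{u}$ and $\delta>0$, since both projections share the same carrier (the region where $g>\delta^{1/p}$). By the definition of convergence in $T$-conditional probability this shows at once that $|f_{n}-f|^{p}\to0$ in $T$-conditional probability if and only if $f_{n}\to f$ in $T$-conditional probability. Second, I would use the convexity estimate $|a-b|^{p}\leq2^{p-1}(|a|^{p}+|b|^{p})$, valid for $p\geq1$, together with the (standard) facts that a singleton of $L^{1}(T)$ is $T$-uniform and that $T$-uniformity is stable under domination and finite sums; this lets me transfer $T$-uniformity between $(|f_{n}|^{p})$ and $(|f_{n}-f|^{p})$ once $f\in L^{p}(T)$ is known, via $|f_{n}|^{p}\leq2^{p-1}(h_{n}+|f|^{p})$ and $h_{n}\leq2^{p-1}(|f_{n}|^{p}+|f|^{p})$.

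For (i) $\Rightarrow$ (ii), norm convergence $N_{p}(f_{n}-f)\to0$ is exactly $Th_{n}\to0$ in order, i.e. $h_{n}\to0$ $T$-strongly in $L^{1}(T)$. Feeding this into Theorem \ref{Th0} with limit $0$ yields that $(h_{n})$ is $T$-uniform and that $h_{n}\to0$ in $T$-conditional probability. The band identity then gives $f_{n}\to f$ in $T$-conditional probability, and the convexity estimate upgrades the $T$-uniformity of $(h_{n})$ to that of $(|f_{n}|^{p})$ (here $|f|^{p}$ is a $T$-uniform singleton because $f\in L^{p}(T)$), which is (ii).

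The substantive direction is (ii) $\Rightarrow$ (i), and its only genuine difficulty is showing that the limit $f$ actually lies in $L^{p}(T)$: $T$-uniformity of $(|f_{n}|^{p})$ gives only $T$-boundedness, and one must push $T$-conditional-probability convergence through the non-Lipschitz power map and then pass to the limit. I would handle this by truncation. For fixed $M$, the sequence $(|f_{n}|\wedge Me)$ is order bounded by $Me$ and converges in $T$-conditional probability to $|f|\wedge Me$ (using $\big||f_{n}|\wedge Me-|f|\wedge Me\big|\leq|f_{n}-f|$), so Lemma \ref{UI-C}(iii) gives $|f_{n}|^{p}\wedge M^{p}e\to|f|^{p}\wedge M^{p}e$ in $T$-conditional probability. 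Being order bounded, the truncated sequence is trivially $T$-uniform, so Theorem \ref{Th0} yields $T(|f_{n}|^{p}\wedge M^{p}e)\to T(|f|^{p}\wedge M^{p}e)$ in order, whence $T(|f|^{p}\wedge M^{p}e)\leq\sup_{n}T|f_{n}|^{p}=:B\in L^{1}(T)$ for every $M$. Letting $M\to\infty$ and invoking the $T$-universal completeness of $L^{1}(T)$ forces $|f|^{p}\in L^{1}(T)$, i.e. $f\in L^{p}(T)$.

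Once $f\in L^{p}(T)$, the element $|f|^{p}$ is a $T$-uniform singleton, the convexity estimate makes $(h_{n})=(|f_{n}-f|^{p})$ $T$-uniform, and the band identity gives $h_{n}\to0$ in $T$-conditional probability. A final application of Theorem \ref{Th0} to $(h_{n})$ delivers $Th_{n}\to0$ in order, that is $N_{p}(f_{n}-f)\to0$, which is (i). The expected main obstacle is precisely the integrability of the limit in this second direction; everything else is routine once the truncation-plus-universal-completeness argument is in place, and I expect this to be the very point the authors flag as needing clarification relative to \cite{L-180}.
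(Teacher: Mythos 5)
Your proof is correct and follows the same skeleton as the paper's: both directions are reduced to Theorem \ref{Th0}, membership $f\in L^{p}(T)$ is extracted from the $T$-universal completeness of $L^{1}(T)$, and the convexity bound $|a-b|^{p}\leq 2^{p-1}(|a|^{p}+|b|^{p})$ together with the interplay between $|f_{n}-f|\wedge e$ and $|f_{n}-f|^{p}\wedge e$ does the remaining transfer work. Two local choices differ, both defensibly in your favour. First, in (i)$\Rightarrow$(ii) you feed the auxiliary sequence $h_{n}=|f_{n}-f|^{p}$, which converges $T$-strongly to $0$ in $L^{1}(T)$ by the very definition of $N_{p}$, directly into Theorem \ref{Th0}, whereas the paper first passes to $L^{1}$-convergence of $(f_{n})$ via the Lyapunov inequality and then runs a separate band-projection computation to obtain $T$-uniformity of $(|f_{n}|^{p})$; your route replaces that computation by the stability of $T$-uniformity under domination and addition of a singleton, which encodes the same estimate more cleanly. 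Second, and more substantively, in (ii)$\Rightarrow$(i) the paper asserts $|f_{n}|^{p}\rightarrow|f|^{p}$ in $T$-conditional probability as an immediate consequence of Lemma \ref{UI-C}, even though part (iii) of that lemma requires the sequence to be order bounded, which $(f_{n})$ need not be; your truncation at level $Me$ before applying the power map, followed by the passage $M\rightarrow\infty$ through $T$-universal completeness, supplies exactly the justification that step needs. One minor caveat: the band identity $P_{(g^{p}-\delta e)^{+}}=P_{(g-\delta^{1/p}e)^{+}}$ and the claimed stability properties of $T$-uniformity (under domination, finite sums, and for singletons) deserve a one-line justification via the band-projection characterization of $T$-uniformity from \cite{L-14}, since the original Definition \ref{UI-A} is not obviously domination-stable.
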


\begin{proof}
(i) $\Longrightarrow$ (ii) By Lyapunov inequality \cite[Corollary 3.8]{L-180},
$T\left\vert f_{n}-f\right\vert \leq\left(  T\left\vert f_{n}-f\right\vert
^{p}\right)  ^{1/p}$ and so $f_{n}$ converges strongly to $f$ in $L^{1}\left(
T\right)  .$ According to Theorem \ref{Th0} the sequence $\left(
f_{n}\right)  $ converges to $f$ in $T$-conditional probability. To show that
$\left(  \left\vert f_{n}\right\vert ^{p}\right)  $ is $T$-uniform let us
consider a net $\left(  P_{\alpha}\right)  $ of band projections such that
$P_{\alpha}\overset{o}{\longrightarrow}0$ and it will be enough by
\cite[Theorem 4.2]{L-14} to show that $\lim\limits_{\alpha}\sup\limits_{n}%
TP_{\alpha}\left\vert f_{n}\right\vert ^{p}=0$. Observe first that%
\[
TP_{\alpha}\left\vert f_{n}\right\vert ^{p}\leq2^{p-1}TP_{\alpha}\left\vert
f_{n}-f\right\vert ^{p}+2^{p-1}TP_{\alpha}\left\vert f\right\vert ^{p}.
\]
Thus we have for every $k\in\mathbb{N},$%
\[
\sup\limits_{n}TP_{\alpha}\left\vert f_{n}\right\vert ^{p}\leq TP_{\alpha}%
\sup\limits_{n\leq k}\left\vert f_{k}\right\vert ^{p}+2^{p-1}TP_{\alpha
}\left\vert f\right\vert ^{p}+2^{p-1}\sup\limits_{n\geq k}T\left\vert
f_{n}-f_{k}\right\vert ^{p}.
\]
\newline By taking the $\lim\sup$ over $\alpha$ we obtain%
\[
\limsup\limits_{\alpha}\sup\limits_{n}TP_{\alpha}\left\vert f_{n}\right\vert
^{p}\leq2^{p-1}\sup\limits_{n\geq k}T\left\vert f_{n}-f_{k}\right\vert ^{p}.
\]
As this happens for every $k$ we get $\limsup\limits_{\alpha}\sup
\limits_{n}TP_{\alpha}\left\vert f_{n}\right\vert ^{p}=0$ and we are done.

(ii) $\Longrightarrow$ (i) We show first that $f$ belongs to $L^{p}\left(
T\right)  .$ As $f_{n}\longrightarrow f$ in $T$-conditional probability we
have $\left\vert f_{n}\right\vert \longrightarrow\left\vert f\right\vert $ and
$\left\vert f_{n}\right\vert ^{p}\longrightarrow\left\vert f\right\vert ^{p}$
in $T$-conditional probability as well. Hence for every $k,$ $\left(
T\left\vert f_{n}^{p}\right\vert \wedge ke\right)  $ converges in order to
$T\left\vert f\right\vert ^{p}\wedge ke$ as $n\longrightarrow\infty.$ Whence
It follows that $T\left\vert f\right\vert ^{p}\wedge ke\leq M:=\sup
T\left\vert f_{n}\right\vert ^{p}.$ Now as $L^{1}\left(  T\right)  $ is
$T$universally complete $\left\vert f\right\vert ^{p}\in L^{1}\left(
T\right)  $ and then $f\in L^{p}\left(  T\right)  .$ Now the inequality%
\[
\left\vert f_{n}-f\right\vert ^{p}\leq2^{p-1}\left(  \left\vert f_{n}%
\right\vert ^{p}+\left\vert f\right\vert ^{p}\right)
\]
shows that $\left(  \left\vert f_{n}-f\right\vert ^{p}\right)  $ is
$T$-uniform. Also we use the following obvious result, true for $p\geq1,$%
\[
\left\vert f_{n}-f\right\vert ^{p}\wedge e=\left(  \left\vert f_{n}%
-f\right\vert \wedge e\right)  ^{p}\leq\left\vert f_{n}-f\right\vert \wedge e.
\]
It follows from \cite[Lemma 4]{L-900} that $\left\vert f_{n}-f\right\vert
^{p}$ converges to $0$ in $T$-conditional probability. Now using Theorem
\ref{Th0} we deduce that $T\left\vert f_{n}-f\right\vert ^{p}\longrightarrow
0,$ and so $f_{n}\longrightarrow f$ in $L^{p}\left(  T\right)  $ as required.
\end{proof}

We are ready now to prove the main result of this section which answers
positively an open question asked in \cite{L-360}.

\begin{theorem}
\label{ThB}Let $\left(  E,e,T\right)  $ be a conditional Riesz triple. Then
the space $L^{p}\left(  T\right)  $ is sequentially strongly complete for
$1\leq p\leq\infty.$
\end{theorem}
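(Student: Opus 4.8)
The plan is to reduce the case $1<p<\infty$ to the already established endpoint results via the two Vitali-type theorems above. The cases $p=1$ and $p=\infty$ are known: $p=1$ is the theorem of Kuo, Rodda and Watson from \cite{L-360}, while $p=\infty$ has a direct proof in \cite{L-360}. So fix $p\in(1,\infty)$ and let $(f_n)$ be a $T$-strongly Cauchy sequence in $L^p(T)$, meaning $T|f_n-f_m|^p\to0$ in order. First I would pass to $L^1(T)$: by the Lyapunov inequality \cite[Corollary 3.8]{L-180} one has $T|f_n-f_m|\le\left(T|f_n-f_m|^p\right)^{1/p}=N_p(f_n-f_m)$, so $(f_n)$ is $T$-strongly Cauchy in $L^1(T)$. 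By the Kuo--Rodda--Watson completeness of $L^1(T)$ there is $f\in L^1(T)$ with $T|f_n-f|\to0$ in order, and then Theorem \ref{Th0} gives that $(f_n)$ converges to $f$ in $T$-conditional probability.

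The crux is to show that the family $(|f_n|^p)$ is $T$-uniform, after which the implication (ii)$\Rightarrow$(i) of Theorem \ref{Th1} will simultaneously place $f$ in $L^p(T)$ and yield $N_p(f_n-f)\to0$, completing the proof. For the $T$-uniformity I would use the band-projection criterion \cite[Theorem 4.2]{L-14}: given an order null net $(P_\alpha)$ of band projections it suffices to check $\lim_\alpha\sup_n TP_\alpha|f_n|^p=0$. Writing $g_N=\sup_{n,m\ge N}T|f_n-f_m|^p$, which exists and decreases to $0$ in order because the sequence is $T$-strongly Cauchy and $L^1(T)$ is $T$-universally complete, I would split the index set into the ranges $n\le N$ and $n\ge N$ and use $|f_n|^p\le 2^{p-1}(|f_n-f_N|^p+|f_N|^p)$ for the second range to obtain, for each fixed $N$,
\[
\sup_n TP_\alpha|f_n|^p\le TP_\alpha\Big(\sup_{m\le N}|f_m|^p\Big)+2^{p-1}TP_\alpha|f_N|^p+2^{p-1}g_N.
\]
The first two terms apply $P_\alpha$ to fixed elements of $L^1(T)^+$, so they tend to $0$ in order along the net by order-nullity of $(P_\alpha)$ together with order continuity of $T$; taking $\limsup_\alpha$ leaves $2^{p-1}g_N$, and letting $N\to\infty$ drives this to $0$. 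This mirrors the $2^{p-1}$-convexity bookkeeping already used in the proof of Theorem \ref{Th1}, with the finite truncation $\sup_{m\le N}|f_m|^p$ playing the role that the genuine limit $f$ plays there.

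The main obstacle I anticipate is precisely this $T$-uniformity step, specifically justifying that the tail suprema $g_N$ exist in $L^1(T)$ and converge to $0$ in order, and that $P_\alpha$ applied to the fixed controls $\sup_{m\le N}|f_m|^p$ and $|f_N|^p$ is order null. Both points rest on the $T$-universal completeness of $L^1(T)$ and the order continuity of $T$, so I would verify them carefully rather than treat them as routine; once they are in place the remaining estimates are elementary. It is also worth confirming that no boundedness hypothesis beyond the Cauchy condition enters, since the splitting at the index $N$ is exactly what allows us to avoid invoking a limit in $L^p(T)$ before it has been constructed.
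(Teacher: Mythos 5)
Your proposal is correct and follows essentially the same route as the paper: reduce to $L^1(T)$ via the Lyapunov inequality, invoke the Kuo--Rodda--Watson completeness of $L^1(T)$ to produce the candidate limit $f$, pass to convergence in $T$-conditional probability, and establish $T$-uniformity of $(|f_n|^p)$ by the same splitting at a fixed index together with the $2^{p-1}$-convexity estimate before concluding with Theorem \ref{Th1}. The only cosmetic difference is that the paper separately verifies $f\in L^p(T)$ via Lemmas \ref{UI-C} and \ref{UI-B} before applying Theorem \ref{Th1}, whereas you let the implication (ii)$\Rightarrow$(i) of Theorem \ref{Th1} absorb that step, which is legitimate since its proof contains exactly that verification.
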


\begin{proof}
(i) Assume that $1<p<\infty.$ Let $\left(  f_{n}\right)  $ be a Cauchy
sequence in $L^{p}\left(  T\right)  .$ Then by Lyapunov Inequallity
\cite[Corollary 3.8]{L-180}, $\left(  f_{n}\right)  $ is a Cauchy sequence in
$L^{1}\left(  T\right)  .$ It follows from the strong sequential completeness
of $L^{1}\left(  T\right)  $ \cite[Theorem ??]{L-360} that $\left(
f_{n}\right)  $ is strongly convergent in $L^{1}\left(  T\right)  $, that is,
there exists $f\in L^{1}\left(  T\right)  $ such that $T\left\vert
f_{n}-f\right\vert \longrightarrow0$ in order. According to Theorem \ref{Th1},
we deduce that $f_{n}\longrightarrow f$ in $T$-conditional probability.
According to Lemma \ref{UI-C} we deduce that $\left\vert f_{n}\right\vert
^{p}\longrightarrow\left\vert f\right\vert ^{p}$ in $T$-conditional
probability. It follows from Lemma \ref{UI-B} that
\[
T\left(  \left\vert f_{n}\right\vert ^{p}\wedge ke\right)  \overset
{o}{\longrightarrow}T\left(  \left\vert f\right\vert ^{p}\wedge ke\right)  ,
\]
for every integer $k.$ Thus%
\[
T\left\vert f\right\vert ^{p}\wedge ke\leq y:=\sup T\left\vert f_{n}%
\right\vert ^{p}\in L^{1}\left(  T\right)  .
\]
This shows that $f\in L^{p}\left(  T\right)  .$ To complete the proof we will
invoke Theorem \ref{Th1}, and it remains only to show that $\left(  \left\vert
f_{n}\right\vert ^{p}\right)  $ is $T$-uniform. To this end consider a net
$\left(  P_{\alpha}\right)  $ of band projections such that $P_{\alpha
}\overset{o}{\longrightarrow}0$ and let $k$ be fixed in $\mathbb{N}.$ Then%
\[
TP_{\alpha}\left\vert f_{n}\right\vert ^{p}\leq2^{p-1}TP_{\alpha}\left\vert
f_{n}-f_{k}\right\vert ^{p}+2^{p-1}TP_{\alpha}\left\vert f_{k}\right\vert
^{p}.
\]
Thus%
\[
\sup\limits_{n}TP_{\alpha}\left\vert f_{n}\right\vert ^{p}\leq TP_{\alpha}%
\sup\limits_{k\leq n}\left\vert f_{k}\right\vert ^{p}+2^{p-1}TP_{\alpha
}\left\vert f_{k}\right\vert ^{p}+2^{p-1}\sup\limits_{n\geq k}T\left\vert
f_{n}-f_{k}\right\vert ^{p}.
\]
It follows that%
\[
\limsup\limits_{\alpha}\sup\limits_{n}TP_{\alpha}\left\vert f_{n}\right\vert
^{p}\leq2^{p-1}\sup\limits_{n\geq k}T\left\vert f_{n}-f_{k}\right\vert .
\]
As this happens for every integer $k$ we get%
\[
\limsup\limits_{\alpha}\sup\limits_{n}TP_{\alpha}\left\vert f_{n}\right\vert
^{p}=0.
\]
\newline This shows that the sequence $\left(  \left\vert f_{n}\right\vert
^{p}\right)  $ is $T$-uniform and completes the proof of this part.

(ii) We are almost done because the case $p=1$ is already proved in
\cite{L-360} and for the case $p=\infty$ we have even a stronger result
obtained also in \cite{L-360} stating that $L^{\infty}\left(  T\right)  $ is
rather strong complete. But as the proof there seems to be somehow quick we
provide here the full details for the strong sequential completeness of
$L^{\infty}\left(  T\right)  $. Recall that%
\[
\left\Vert f\right\Vert _{T,\infty}=\inf\left\{  u\in R\left(  T\right)
:\left\vert f\right\vert \leq u\right\}  .
\]
Let $\left(  f_{n}\right)  $ be a Cauchy sequence in $L^{\infty}\left(
T\right)  $ and put%
\[
h_{n}=\sup\limits_{r,s\geq n}\left\Vert f_{r}-f_{s}\right\Vert _{T,\infty}\in
L^{\infty}\left(  T\right)  .
\]
Then $h_{n}\downarrow0.$ It follows easily from the $T$universal completeness
of $L^{1}\left(  T\right)  $ that $R\left(  T\right)  $ is a regular Riesz
subspace of $L^{1}\left(  T\right)  $ (see also Theorem 6 in \cite{L-900} and
the remark that follows). Also $L^{\infty}\left(  T\right)  $ is regular
because it is an ideal in $L^{1}\left(  T\right)  $ (\cite{L-65}). Let
$\left(  f_{n}\right)  $ be a Cauchy sequence in $L^{\infty}\left(  T\right)
$ and let $v_{n}=\sup\limits_{p,q\geq n}\left\Vert f_{p}-f_{q}\right\Vert
_{\infty,T}.$ Then $v_{n}\downarrow0$ in $L^{\infty}\left(  T\right)  $ so
$v_{n}\downarrow0$ in $L^{1}\left(  T\right)  .$ Now as $\left\vert
f_{p}-f_{q}\right\vert \leq\left\Vert f_{p}-f_{q}\right\Vert _{\infty,T}$ we
get $\sup\limits_{p,q\geq n}\left\vert f_{p}-f_{q}\right\vert \downarrow0$ in
$L^{1}\left(  T\right)  .$ But $L^{1}\left(  T\right)  $ is order complete and
so there exists $f\in L^{1}\left(  T\right)  $ such that $\left\vert
f_{n}-f\right\vert \overset{o}{\longrightarrow}0$. Now letting
$p\longrightarrow\infty$ in the inequality%
\[
\left\vert f_{n}-f_{p}\right\vert \leq\left\Vert f_{n}-f_{p}\right\Vert
_{\infty,T}\leq v_{n},
\]
we get $\left\vert f_{n}-f\right\vert \leq v_{n}\in R\left(  T\right)  $ and
so $\left\Vert f_{n}-f\right\Vert _{\infty,T}$ $\overset{o}{\longrightarrow
}0.$ This completes the proof.
\end{proof}

\end{document}